\newtheorem{theorem}{Theorem}[section]
\newtheorem{lemma}[theorem]{Lemma}
\newtheorem{proposition}[theorem]{Proposition}
\theoremstyle{definition}
\newtheorem{example}[theorem]{Example}
\theoremstyle{remark}
\numberwithin{equation}{section}
\DeclareSymbolFont{AMSb}{U}{msb}{m}{n}
\DeclareMathSymbol{\A}{\mathbin}{AMSb}{"41}
\DeclareMathSymbol{\B}{\mathbin}{AMSb}{"42}
\DeclareMathSymbol{\C}{\mathbin}{AMSb}{"43}
\DeclareMathSymbol{\D}{\mathbin}{AMSb}{"44}
\DeclareMathSymbol{\E}{\mathbin}{AMSb}{"45}
\DeclareMathSymbol{\F}{\mathbin}{AMSb}{"46}
\DeclareMathSymbol{\G}{\mathbin}{AMSb}{"47}
\DeclareMathSymbol{\HH}{\mathbin}{AMSb}{"48}
\DeclareMathSymbol{\I}{\mathbin}{AMSb}{"49}
\DeclareMathSymbol{\N}{\mathbin}{AMSb}{"4E}
\DeclareMathSymbol{\PP}{\mathbin}{AMSb}{"50}
\DeclareMathSymbol{\Q}{\mathbin}{AMSb}{"51}
\DeclareMathSymbol{\R}{\mathbin}{AMSb}{"52}
\DeclareMathSymbol{\SSS}{\mathbin}{AMSb}{"53}
\DeclareMathSymbol{\T}{\mathbin}{AMSb}{"54}
\DeclareMathSymbol{\U}{\mathbin}{AMSb}{"55}
\DeclareMathSymbol{\V}{\mathbin}{AMSb}{"56}
\DeclareMathSymbol{\W}{\mathbin}{AMSb}{"57}
\DeclareMathSymbol{\X}{\mathbin}{AMSb}{"58}
\DeclareMathSymbol{\Y}{\mathbin}{AMSb}{"59}
\DeclareMathSymbol{\Z}{\mathbin}{AMSb}{"5A}
\def\th#1{\bigskip\noindent{\bf #1}\bgroup\it}
\def\tth#1{\bigskip\noindent{\bf #1}\bgroup}
\def\endth{\egroup\par\bigskip}
\def\endtth{\egroup\par\bigskip}
\newcommand{\vp}{\varphi}
\begin{document}

\title
[Holomorphic functions of matrices]
{Orthogonally additive and orthogonally multiplicative holomorphic functions of matrices}

\author[Q. Bu, C.-J. Liao and N.-C. Wong]{Qingying Bu$^1$, Chingjou Liao$^2$ and Ngai-Ching Wong$^3$$^{*}$}

\address{$^1$ Department of Mathematics, University of Mississippi, University, MS 38677, USA}
\email{\textcolor[rgb]{0.00,0.00,0.84}{qbu@olemiss.edu}}

\address{$^2$ Department of  Mathematics,
Hong Kong Baptist University, Hong Kong.}
\email{\textcolor[rgb]{0.00,0.00,0.84}{cjliao@hkbu.edu.hk}}

\address{$^3$ Department of Applied Mathematics,
National Sun Yat-sen University, Kaohsiung, 80424, Taiwan.}
\email{\textcolor[rgb]{0.00,0.00,0.84}{wong@math.nsysu.edu.tw}}

\dedicatory{This paper is dedicated to Professor Tsuyoshi Ando}

\keywords{Holomorphic functions,
homogeneous polynomials, orthogonally additive and multiplicative, zero product preserving, matrix algebras}

\subjclass[2010]{Primary 46G25; Secondary 17C65, 46L05, 47B33}


\date{Received: xxxxxx; Revised: yyyyyy; Accepted: zzzzzz.
\newline \indent $^{*}$ Corresponding author}

\begin{abstract}
Let $H:M_m\to M_m$ be a holomorphic function of the algebra $M_m$ of  complex $m\times m$ matrices.
Suppose that $H$ is  orthogonally additive
and orthogonally multiplicative on self-adjoint elements.
We show that either the range of $H$ consists of zero trace elements, or there is
a scalar sequence $\{\lambda_n\}$ and an invertible $S$ in
$M_m$ such that
$$
H(x) =\sum_{n\geq 1} \lambda_n S^{-1}x^nS, \quad\forall x \in M_m,
$$
or
$$
H(x) =\sum_{n\geq 1} \lambda_n S^{-1}(x^t)^nS, \quad\forall x \in M_m.
$$
Here, $x^t$ is the transpose of the matrix $x$.
In the latter case,
we always have the first representation form when $H$ also preserves zero products.
We also discuss the cases where the domain and the range carry different dimensions.
\end{abstract}

\maketitle

\section{Introduction}

Let $E$ and $F$ be real or complex Banach spaces,
and $n$ a positive integer. A map $P: E \to F$ is called a \emph{bounded
$n$-homogeneous polynomial} if there is a  bounded  symmetric  $n$-linear
operator $T: E\times\cdots\times E \to F$ such that
$$
P(x) = T(x,\ldots, x), \quad\forall x \in E.
$$
In this case, we have
$$
T(x_1,\ldots, x_n) = \frac{1}{2^nn!}\sum_{\epsilon_i=\pm1}
\epsilon_1\cdots\epsilon_n P\left(\sum_{i=1}^n \epsilon_ix_i\right),
\quad\forall x_1,\ldots, x_n\in E.
$$
A map $H: U\to F$ is said to be \emph{holomorphic} on a nonempty open subset $U$ of $E$
if for each $a $ in $U$ there exist an open ball $B_E(a;r) \subseteq U$,  centered at $a$ with radius $r>0$, and
a unique sequence of bounded $n$-homogeneous polynomials $P_n:E \to F$ such that
$$
H(x) = \sum_{n=0}^\infty P_n(x - a)
$$
uniformly for all $x $ in $B_E(a;r)$.

To study holomorphic functions,
we might assume, after translation, $a=0$.  A holomorphic function $H: B_E(0;r)\to F$ has its Taylor series at zero:
\begin{align}\label{eq:Taylor}
H(x) = \sum_{n=0}^\infty P_n(x)
\end{align}
uniformly for all $x$ in $B_E(0;r)$.
In the complex case, we have the Cauchy integral formulae:
\begin{align}\label{eq:cauchy}
P_n(x) = \frac{1}{2\pi i}\int_{|\lambda| = 1} \frac{H(\lambda x)}{\lambda^{n+1}}\,d\lambda, \qquad n = 0, 1, 2, \ldots.
\end{align}
For the general theory of homogeneous polynomials and holomorphic functions, we refer to \cite{Di, Mu}.

When $E, F$ are Banach  algebras,
a  function $\Phi:E\to F$  is said to be {\it
orthogonally additive} if
$$
fg = gf=0\quad\text{implies}\quad \Phi(f + g) = \Phi(f) + \Phi(g),\quad \forall f,g\in E,
$$
and \emph{orthogonally multiplicative} if
$$
fg = gf=0\quad\text{implies}\quad \Phi(f)\Phi(g) = 0,\quad \forall f,g\in E.
$$
The notions of orthogonally additive and orthogonally multiplicative transformations
have been studied by many authors, for example, \cite{Ar83,Ja90,JW96,HS99,CKLW03,araujo04,KLW04,PV,BLL06,CLZ06,PPV08,CLZ10,BR11,PP12,LW13}.

Our goal is to study orthogonally additive and
orthogonally multiplicative holomorphic functions between C*-algebras.
 Every abelian C*-algebra is the algebra $C_0(X)$ of continuous functions
of a locally compact Hausdorff space $X$ vanishing at infinity.
In general, a C*-algebra can be embedded into  $B(H)$ as
a norm closed self-adjoint subalgebra.
When $E,F$ are  algebras of continuous functions,
it is  established in \cite{BHW} the following nice representation.

\begin{proposition}[{\cite{BHW}}]
\label{prop:BSTPdphf}
Let $H: B_{C_0(X)}(0;r)\to C_0(Y)$ be a
bounded orthogonally additive and orthogonally multiplicative holomorphic function.
Then there exist a  sequence $\{h_n\}$ of bounded scalar continuous functions in $C(Y)$ and a map
$\varphi: Y\rightarrow X$ such that
$$
H(f)(y) = \sum_{n\geq1} h_n(y)(f(\varphi(y)))^n, \quad\forall y\in Y,
$$
uniformly for all $f $ in $B_{C_0(X)}(0;r)$.
Here,  $\varphi$ is continuous  wherever any $h_n$ is nonvanishing.
\end{proposition}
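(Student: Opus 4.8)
The plan is to pass from the holomorphic function $H$ to its homogeneous Taylor components, handle each component, and then glue. First I would expand $H$ as in \eqref{eq:Taylor}, $H(f)=\sum_{n\ge 0}P_n(f)$, with the $P_n$ recovered by the Cauchy integrals \eqref{eq:cauchy}. If $f,g$ are orthogonal (i.e.\ $fg=gf=0$, equivalently they have disjoint cozero sets), then so are $\lambda f$ and $\mu g$ for all scalars $\lambda,\mu$ with $\lambda f,\mu g$ in the ball; applying $H$ and comparing coefficients of $\lambda^n\mu^m$ in the identities $H(\lambda f+\mu g)=H(\lambda f)+H(\mu g)$ and $H(\lambda f)H(\mu g)=0$ shows that every $P_n$ is orthogonally additive, that $P_n(f)P_m(g)=0$ for all $n,m$, and hence (taking $n=m$) that every $P_n$ is orthogonally multiplicative; the constant term obeys $P_0(f)P_0(g)=0$, so $P_0\equiv 0$ and $H(0)=0$. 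Thus it suffices to produce the weighted-composition form in each fixed degree while keeping track of the cross-degree relations $P_nP_m\equiv 0$ on orthogonal pairs.

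Next, fix $n\ge 1$ and a point $y\in Y$. The scalar map $f\mapsto P_n(f)(y)$ is an orthogonally additive $n$-homogeneous polynomial on $C_0(X)$, so by the representation theorem for such polynomials \cite{BLL06, CLZ06} it has the form $P_n(f)(y)=\int_X f(x)^n\,d\mu_{n,y}(x)$ for a finite regular Borel measure $\mu_{n,y}$ on $X$. Orthogonal multiplicativity of $P_n$ forces $\int f^n\,d\mu_{n,y}\cdot\int g^n\,d\mu_{n,y}=0$ whenever $f,g$ have disjoint cozero sets; if $\mu_{n,y}$ had two distinct points in its support one could, by Urysohn's lemma, choose such $f,g$ making both integrals nonzero, a contradiction, so $\mu_{n,y}$ is either $0$ or a single point mass, say $\mu_{n,y}=h_n(y)\,\delta_{\varphi_n(y)}$ with $\varphi_n(y)\in X$ (a mass at infinity would annihilate every $f^n$). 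This yields $P_n(f)(y)=h_n(y)\,f(\varphi_n(y))^n$ on the set $\coz(h_n)$ and $P_n(f)(y)=0$ off it. One then checks, using the continuity of the functions $P_n(f)$ (testing against suitably chosen fixed $f$, together with the estimate $|h_n(y)|\le\|P_n\|$), that $h_n$ is a bounded element of $C(Y)$ and that $\varphi_n$ is continuous on the open set $\coz(h_n)$.

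Finally I would patch the $\varphi_n$ into one map. If $y_0\in\coz(h_n)\cap\coz(h_m)$ with $\varphi_n(y_0)\ne\varphi_m(y_0)$, pick by Urysohn orthogonal $f,g\ge 0$ in $C_0(X)$ with $f(\varphi_n(y_0))=g(\varphi_m(y_0))=1$; then $P_n(f)(y_0)P_m(g)(y_0)=h_n(y_0)h_m(y_0)\ne 0$, contradicting Step~1. So the $\varphi_n$ agree on overlaps and patch to a single $\varphi$ on the open set $\bigcup_n\coz(h_n)$ (define $\varphi$ arbitrarily off it); $\varphi$ is continuous there because continuity is a local property and $\{\coz(h_n)\}$ is an open cover on which $\varphi$ restricts to the continuous $\varphi_n$. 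Then $P_n(f)(y)=h_n(y)\,f(\varphi(y))^n$ for every $y$ (trivially so where $h_n(y)=0$), and summing over $n$, with the uniform convergence of \eqref{eq:Taylor} on the ball and the boundedness of the $h_n$, gives $H(f)(y)=\sum_{n\ge 1}h_n(y)\,f(\varphi(y))^n$ uniformly on $B_{C_0(X)}(0;r)$; uniqueness of the $h_n$ is inherited from uniqueness of the Taylor coefficients.

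I expect the technical heart to be Step~2: converting orthogonal multiplicativity into the single-point-mass conclusion is quick, but extracting genuine \emph{continuity} of $\varphi_n$ on $\coz(h_n)$ — and no more than that, since nothing can be said where all the $h_n$ vanish — together with the $C_0$ bookkeeping forced by non-compactness of $X$ and $Y$ (ensuring the masses sit inside $X$ rather than at infinity, and that the $h_n$ genuinely land in $C(Y)$), is where care is needed. The reduction in Step~1 and the gluing in Step~3 are comparatively soft.
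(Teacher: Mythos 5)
First, a caveat: the paper does not actually prove Proposition \ref{prop:BSTPdphf}; it is quoted from the preprint \cite{BHW}, so there is no in-paper argument to compare yours with. Your architecture (Taylor expansion via \eqref{eq:cauchy}, componentwise orthogonal additivity and multiplicativity including the cross-degree relations $P_n(f)P_m(g)=0$, pointwise representation of each $P_n(\cdot)(y)$ by a measure, reduction to a point mass, and gluing of the $\varphi_n$) is the natural one and is surely in the spirit of \cite{BHW} and of Lemma \ref{lem:ortho-sums}. Steps 1 and 3 are correct; in Step 2 the Urysohn choice needs one extra word because $\mu_{n,y}$ is a complex measure and a single bump function could integrate to zero by cancellation --- but if $\int f^n\,d\mu_{n,y}=0$ for every $f\in C_c(U)$, then since every nonnegative $g\in C_c(U)$ is an $n$-th power one gets $\mu_{n,y}|_U=0$, so the fix is routine.

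The genuine gap sits exactly where you write ``one then checks'': the continuity assertions. Continuity of $\varphi$ at a point $y_0$ with some $h_n(y_0)\neq 0$ is true but needs an argument you have not supplied: choose $f$ with $f(\varphi(y_0))=1$ and $\coz(f)$ inside a prescribed neighborhood $U$ of $\varphi(y_0)$; continuity of $P_n(f)$ yields a neighborhood $W$ of $y_0$ on which $P_n(f)(y)=h_n(y)f(\varphi(y))^n\neq 0$, hence $h_n\neq 0$ and $\varphi(y)\in\coz(f)\subseteq U$ on $W$; then taking $f\equiv 1$ near $\varphi(y_0)$ gives $h_m=P_m(f)$ on a neighborhood of $y_0$, so every $h_m$ is continuous at such points. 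By contrast, the global claim $h_n\in C(Y)$ cannot be ``checked by testing against suitably chosen fixed $f$'' at points where \emph{all} the $h_m$ vanish: when $X$ is not compact the point masses at nearby $y$ can escape to infinity while every $P_n(f)$ stays continuous. Concretely, for $X=\R$, $Y=\{0\}\cup\{1/k: k\geq 1\}$, the bounded linear (hence holomorphic) map $H(f)(1/k)=f(k)$, $H(f)(0)=0$ is orthogonally additive and orthogonally multiplicative, yet any representation of the stated form forces $h_1(1/k)=1$ and $h_1(0)=0$, so $h_1\notin C(Y)$. Thus this part of the statement needs either extra input (e.g.\ compactness of $X$, or restricting where the $h_n$ are claimed continuous) or a genuinely different argument; your sketch as written does not close it, and that --- as you yourself suspected --- is where the real work lies.
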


Talking  orthogonality of a pair of elements $a,b$ in a general C*-algebra (in $B(H)$), people usually refers one of the following situations.
\begin{enumerate}[(i)]
    \item Zero product: $ab =0$.
    \item Two side zero product: $ab=ba=0$.
    \item Range orthogonality: $a^*b=0$.
    \item Initial space or domain orthogonality: $ab^*=0$.
    \item Range and domain orthogonality: $a^*b = ab^* =0$.
\end{enumerate}
In the abelian case, however, all these concepts coincide.
They coincide in general when both $a,b$ are self-adjoint.

Some partial results concerning the structures of homogeneous polynomials  between
general C*-algebras are also given in \cite{BHW}.  For example, we have

\begin{proposition}[{\cite{BHW}}]\label{prop:BH}
Let $H$ be a complex Hilbert space of arbitrary dimension.
Let $P:B(H)\to B(H)$ be a bounded $n$-homogeneous polynomial, which is  additive and   multiplicative on pairs of orthogonal self-adjoint elements.
Suppose that $P(1)$ is invertible or
$P(B(H))\supset B(H)^+$.
Then there is a nonzero scalar $\lambda$ and
     an invertible operator $S$ in $B(H)$ such that either
    $$
    P(x) = \lambda S^{-1}x^nS,\ \forall x\in B(H),
    $$
or
   $$
    P(x) = \lambda S^{-1}(x^t)^nS,\ \forall x\in B(H).
    $$
\end{proposition}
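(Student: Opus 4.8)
The plan is to \emph{linearize} $P$ using its orthogonal additivity and then recognize the linearization as a scalar multiple of a Jordan automorphism of $B(H)$ by exploiting orthogonal multiplicativity together with the hypothesis. \textit{Step 1 (linearization).} Here I would use only orthogonal additivity. For self-adjoint $a,b$ one has $ab=0$ if and only if $ab=ba=0$, so $P|_{B(H)_{\mathrm{sa}}}$ is an orthogonally additive $n$-homogeneous polynomial, and the standard linearization of such polynomials on C*-algebras (the abelian prototype being the weighted composition operator implicit in Proposition~\ref{prop:BSTPdphf}) produces a bounded linear $\Phi\colon B(H)\to B(H)$ with
\[
P(x)=\Phi(x^{n}),\qquad\forall\,x\in B(H).
\]
Concretely $\Phi$ is defined on positive elements of finite spectrum by $\Phi\bigl(\sum_i t_i p_i\bigr)=\sum_i t_i P(p_i)$ (equivalently $\Phi(b)=P(b^{1/n})$ for $b\ge 0$), checked to be additive and bounded there, extended to $B(H)_{\mathrm{sa}}$ by $\Phi(a)=\Phi(a_+)-\Phi(a_-)$, and then extended complex-linearly. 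The identity $P=\Phi\circ(\,\cdot\,)^{n}$ holds first on $B(H)_{\mathrm{sa}}$ and then on all of $B(H)$, because the symmetric $n$-linear forms of the two bounded $n$-homogeneous polynomials $x\mapsto P(x)$ and $x\mapsto\Phi(x^{n})$ agree on self-adjoint $n$-tuples (by the polarization formula recalled above) and hence on all $n$-tuples, since $B(H)=B(H)_{\mathrm{sa}}\oplus iB(H)_{\mathrm{sa}}$.

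\textit{Step 2 (orthogonality of $\Phi$).} Now orthogonal multiplicativity enters. Since $p^{n}=p$ and $(p+q)^{n}=p+q$ for orthogonal projections $p,q$, orthogonal multiplicativity and additivity of $P$ give $\Phi(p)\Phi(q)=\Phi(q)\Phi(p)=0$ and $\Phi(p+q)=\Phi(p)+\Phi(q)$; applying the same to $n$-th roots and splitting into positive and negative parts upgrades this to $\Phi(a)\Phi(b)=0$ for \emph{all} orthogonal self-adjoint $a,b$. Thus $\Phi$ is a bounded linear bijection-candidate that is additive and zero-product preserving on the projection lattice of $B(H)$, and $\Phi(1)=P(1)$.

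\textit{Step 3 (identifying $\Phi$).} The hypothesis is used here. If $P\bigl(B(H)\bigr)\supseteq B(H)^{+}$, then $\Phi\bigl(B(H)\bigr)\supseteq P\bigl(B(H)\bigr)\supseteq B(H)^{+}$, and since $B(H)^{+}$ spans $B(H)$ the linear map $\Phi$ is onto; if instead $P(1)=\Phi(1)$ is invertible one argues that $\Phi$ has trivial kernel. Either way $\Phi$ is bijective. Writing $e:=\Phi(1)=\Phi(p)+\Phi(1-p)$, the relations $\Phi(p)\Phi(1-p)=\Phi(1-p)\Phi(p)=0$ force $e\Phi(p)=\Phi(p)e$ for every projection $p$, hence $e$ commutes with the closed linear span of $\{\Phi(p)\}$, i.e.\ with all of $\Phi\bigl(B(H)\bigr)=B(H)$; as $B(H)$ has trivial centre, $e=\lambda 1$ with $\lambda\neq0$ (by injectivity of $\Phi$). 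Dividing by $\lambda$ we may assume $\Phi$ is unital, bijective and orthogonality preserving, whence $\Phi$ is a Jordan automorphism of $B(H)$ by the structure theory of zero-product/orthogonality preserving linear maps (cf.\ \cite{BHW, CKLW03} and the references therein). By Herstein's theorem a Jordan automorphism of the prime algebra $B(H)$ is either an automorphism or an anti-automorphism, and each of these is spatial: there is an invertible $S\in B(H)$ with $\Phi(x)=S^{-1}xS$ for all $x$, or $\Phi(x)=S^{-1}x^{t}S$ for all $x$, where $x^{t}$ is the transpose with respect to a fixed orthonormal basis. Restoring the scalar and using $(x^{n})^{t}=(x^{t})^{n}$ gives $P(x)=\Phi(x^{n})=\lambda S^{-1}x^{n}S$ or $P(x)=\lambda S^{-1}(x^{t})^{n}S$, which is the assertion.

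\textit{Where the difficulty lies.} Step~1 is a known linearization and Step~2 is a short computation, so the weight of the argument is in Step~3: extracting genuine bijectivity of $\Phi$ from the comparatively weak hypotheses (invertibility of $P(1)$, or surjectivity of $P$ onto the positive cone), verifying centrality of $\Phi(1)$, and — above all — having available a rigidity theorem for orthogonality-preserving (equivalently zero-product preserving) linear maps that is valid on $B(H)$ for $H$ of \emph{arbitrary} dimension, since one cannot simply reduce to finite matrix blocks. If no such statement is directly citable, establishing it is the technical heart of the proof.
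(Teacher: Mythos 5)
The paper itself does not prove Proposition \ref{prop:BH}: it is quoted from the preprint \cite{BHW}, so the only internal benchmark is the parallel finite-dimensional machinery of Section 2 (Lemmas \ref{lem:C-form}--\ref{lem:idem-ortho} and the proof of Theorem \ref{thm:main-matrix}). Your Steps 1 and 2 follow that template faithfully: the linearization $P(x)=\Phi(x^{n})$ is exactly Lemma \ref{lem:C-form} (Palazuelos--Peralta--Villanueva, Burgos et al.), and transferring the orthogonality to $\Phi$ on projections, then to all orthogonal self-adjoint pairs by spectral approximation, is the same move the paper makes for the maps $T_n$.

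The genuine gap is in Step 3, specifically the branch where only $P(1)$ is assumed invertible. There you assert, without argument, that $\Phi$ has trivial kernel, and you then use bijectivity twice: to get $\Phi(B(H))=B(H)$, hence centrality of $e=\Phi(1)$, and to invoke a rigidity theorem for unital bijective orthogonality preserving maps (your citation \cite{CKLW03} is a finite-dimensional matrix result, and you concede the infinite-dimensional statement may not be citable). This is not a routine repair: the paper's own Example \ref{eg:infty}(b), in its homogeneous incarnation $P(x)=Ex^{n}E^{*}+F(x^{t})^{n}F^{*}$, is a bounded $n$-homogeneous polynomial that is orthogonally additive and multiplicative on self-adjoint elements, has $P(1)=1$, and whose linearization $\Phi(x)=ExE^{*}+Fx^{t}F^{*}$ is unital, bounded, injective and orthogonality preserving on self-adjoint elements, yet is neither surjective nor of the spatial form. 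So invertibility of $\Phi(1)$ alone yields neither surjectivity, nor centrality of $\Phi(1)$, nor the Jordan-automorphism conclusion; whatever argument \cite{BHW} uses in that branch must exploit the hypotheses more substantially than your sketch does. What the projection relations actually give (as in the proof of Lemma \ref{lem:nr}) is $\Phi(1)\Phi(a)=\Phi(a)\Phi(1)$, $\Phi(1)\Phi(a^{2})=\Phi(a)^{2}$ and the Jordan identity $\Phi(1)\Phi(ab+ba)=\Phi(a)\Phi(b)+\Phi(b)\Phi(a)$, so that $\Phi(1)^{-1}\Phi$ is a unital Jordan homomorphism; but to reach Herstein's theorem and spatiality one still needs ontoness, which in your argument is available only from the cone hypothesis $P(B(H))\supset B(H)^{+}$ (that half of your Step 3 is essentially sound). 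In short, the step you yourself flag as the technical heart is precisely what is missing, and in the $P(1)$-invertible branch it cannot be filled by the deductions you propose.
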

Here, $x^t$ is the transpose of a bounded linear operator $x$ in $B(H)$ with respect to some arbitrary but fixed
orthogonal basis of the Hilbert space $H$.
For a matrix $x=(x_{ij})$, we simply define $x^t=(x_{ji})$ to be the transpose of $x$.

However,   results in \cite{BHW} usually assume a
rather strong hypothesis that $P(1)$ is invertible or $P(A)\supset B^+$.  It is not very likely every summand $P_n$ in the Taylor series
\eqref{eq:Taylor}
of a holomorphic function $H:B_A(0;r)\to B$ would satisfy one of these conditions.  Thus, a general structure result
about such holomorphic functions is still far away from reaching.

In this paper, we will establish another important case.
We will give a description of orthogonally additive and orthogonally multiplicative holomorphic function
$H: M_m\to M_m$ of complex matrix algebras.

In the following we say that a map $H$ between complex matrices is \emph{orthogonally additive} (resp.\ \emph{multiplicative})
\emph{on self-adjoint elements} if
$$
H(a+b) = H(a) + H(b)
$$
(resp.
$$
H(a)H(b) = 0)
$$
whenever $a,b$ are self-adjoint complex matrices in its domain with $ab=0$.

\begin{theorem}\label{thm:main-matrix}
Let $m$ and $s$ be
positive integers with $m\geq 2$ and $m\geq s$.
Let $H: B_{M_m}(0;r)\to M_s$ be a holomorphic function between  complex matrix algebras.
Assume $H$ is orthogonally additive and orthogonally multiplicative on self-adjoint elements.
Then either
\begin{enumerate}[(A)]
    \item the range of $H$ consists of zero trace elements
(this case occurs whenever $s<m$), or
    \item
there exist a  scalar sequence $\{\lambda_n\}$ (some $\lambda_n$ can be zero) and an invertible $m\times m$
    matrix $S$ such that
    $$
    H(x) = \sum_{n\geq 1} \lambda_n S^{-1}x^nS, \quad\forall x\in B_{M_m}(0;r),\eqno{(\ddag)}
    $$
or
    $$
    H(x) = \sum_{n\geq 1} \lambda_n S^{-1}(x^t)^nS, \quad\forall x\in B_{M_m}(0;r).
    $$
\end{enumerate}
In the  case (B), we always have the representation ($\ddag$) when $H$ also preserves zero products, i.e.,
$$
ab =0 \quad\implies H(a)H(b) = 0, \qquad \forall a,b \in B_{M_m}(0;r).
$$
\end{theorem}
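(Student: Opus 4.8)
The plan is to decompose $H$ into its homogeneous components, pin down each one on the self-adjoint part, and then reassemble them; the genuine work is the structure of a single ``non-degenerate'' component.

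\textbf{Homogeneous components.} Write the Taylor expansion $H=\sum_{n\ge0}P_n$ as in \eqref{eq:Taylor}. Since $0$ is self-adjoint and $0\cdot 0=0$, orthogonal additivity gives $H(0)=2H(0)$, so $P_0=0$ and the sum starts at $n=1$. If $a,b$ are self-adjoint with $ab=0$ (hence $ab=ba=0$ and $a+b$ self-adjoint), substituting $\lambda a,\lambda b$ into $H(a+b)=H(a)+H(b)$ for small $\lambda$ and comparing coefficients through \eqref{eq:cauchy} shows each $P_n$ is orthogonally additive on self-adjoint elements; substituting $\lambda a,\mu b$ into $H(\lambda a)H(\mu b)=0$ and comparing coefficients of $\lambda^n\mu^k$ gives the stronger \emph{cross relations}
\[
P_n(a)P_k(b)=P_k(b)P_n(a)=0\qquad\text{for all }n,k\ge1
\]
whenever $a,b$ are orthogonal self-adjoint elements; in particular every $P_n$ is itself orthogonally multiplicative on self-adjoint elements. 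If in addition $H$ preserves zero products, the same substitution yields $P_n(a)P_k(b)=0$ whenever $ab=0$, for arbitrary $a,b$.

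\textbf{The dichotomy.} Fix $n$ and set $P=P_n$. For a self-adjoint $a$ with spectral resolution $a=\sum_j t_jq_j$ into mutually orthogonal projections, homogeneity ($P(\zeta x)=\zeta^nP(x)$ for every scalar $\zeta$) together with orthogonal additivity gives $P(a)=\sum_j t_j^n P(q_j)$, with $P(p)P(q)=0$ for orthogonal projections $p,q$; thus $P$ on the self-adjoint part is determined by its values on projections, recovering on each maximal abelian subalgebra the description of Proposition~\ref{prop:BSTPdphf}. On the diagonal $\cong\C^m$ this says $\operatorname{tr}P(D)=\sum_i a_i^n\operatorname{tr}P(E_{ii})$ for $D=\operatorname{diag}(a_1,\dots,a_m)$. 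Since an $n$-homogeneous polynomial vanishing on the self-adjoint part vanishes identically (polarize and extend complex-linearly), $\operatorname{tr}\circ P\equiv 0$ as soon as $\operatorname{tr}P(E_{ii})=0$ for all $i$; call $P$ \emph{degenerate} in that case. If every $P_n$ is degenerate then $\operatorname{tr}H(x)=\sum_n\operatorname{tr}P_n(x)=0$, i.e.\ case (A) holds; this is forced when $s<m$, because, as the next step shows, a non-degenerate component forces the two dimensions to agree.

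\textbf{A non-degenerate component (the main obstacle).} Suppose some $P_{n_0}$ is not degenerate. The crux is to show that then
\[
P_{n_0}(x)=\lambda_{n_0}S^{-1}x^{n_0}S\ \ \text{for all }x,\qquad\text{or the same with }x^{n_0}\text{ replaced by }(x^t)^{n_0},
\]
for a nonzero scalar $\lambda_{n_0}$ and an invertible $m\times m$ matrix $S$ (so in particular $s=m$). This is the matrix counterpart of Proposition~\ref{prop:BH} but \emph{without} assuming $P_{n_0}(1)$ invertible, which is exactly what makes it delicate. I would argue directly with the rank-one projections: orthogonal multiplicativity forces the ranges and initial spaces of the family $\{P_{n_0}(q):q\text{ rank-one}\}$ to interlock exactly as those of the $q$'s, so non-degeneracy makes $q\mapsto P_{n_0}(q)$ an injective, orthogonality-preserving assignment into the rank-one part of $M_s$; the arguments of Proposition~\ref{prop:BH} then manufacture the intertwiner $S$ and the scalar $\lambda_{n_0}$, the two alternatives being distinguished by whether the assignment respects or reverses the lattice of projections. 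Removing the invertibility shortcut here is the step I expect to be hardest.

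\textbf{Gluing and excluding the transpose.} Conjugating by $S$, assume $P_{n_0}(x)=\lambda_{n_0}x^{n_0}$ (the transpose case is handled in the same way with $x^t$). For any other $n$, the cross relations give $a^{n_0}P_n(q)=P_n(q)a^{n_0}=0$ for every self-adjoint $a$ orthogonal to a fixed rank-one projection $q$; since such powers $a^{n_0}$ span the self-adjoint part of the corner $(1-q)M_m(1-q)$, this forces $(1-q)P_n(q)=P_n(q)(1-q)=0$, i.e.\ $P_n(q)\in qM_mq=\C q$, and a short computation on the rank-one subprojections of a $2\times 2$ block shows the resulting scalar is independent of $q$. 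Hence $P_n(q)=\lambda_n q$ for all rank-one $q$, whence $P_n(x)=\lambda_n x^n$ for all $x$ (again by polarization on self-adjoints), and summing gives $H(x)=\sum_{n\ge1}\lambda_nS^{-1}x^nS$ or its transpose analogue: case (B). Finally, suppose $H$ has the transpose form and also preserves zero products; conjugating, assume $H(x)=\sum_n\lambda_n(x^n)^t$. Take $a=E_{11}$ and the idempotent $b=E_{21}+E_{22}$, so $ab=0$ while $ba=E_{21}\ne0$ and $a^n=a$, $b^k=b$ for all $n,k\ge1$. Replacing $a,b$ by $\zeta a,\xi b$ in $H(\zeta a)H(\xi b)=0$ and using that transposition reverses products gives $f(\zeta)f(\xi)\,(ba)^t=0$ for all small $\zeta,\xi$, where $f(\zeta)=\sum_n\lambda_n\zeta^n$; therefore $f\equiv 0$, every $\lambda_n=0$, and the transpose form collapses to ($\ddag$). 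This completes the proof.
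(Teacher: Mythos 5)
Your decomposition into homogeneous components, the cross relations $P_n(a)P_k(b)=P_k(b)P_n(a)=0$, the gluing step (which is in fact a clean shortcut: deriving $P_n(q)\in\C q$ directly from the cross relations with the normalized $P_{n_0}$ avoids the paper's separate treatment of the transpose alternative for the secondary components), and the exclusion of the transpose form under zero-product preservation via $a=E_{11}$, $b=E_{21}+E_{22}$ are all sound and close in spirit to the paper's argument (Lemma~\ref{lem:ortho-sums} and the end of the proof of Theorem~\ref{thm:main-matrix}, Lemma~\ref{lem:idem-ortho}). But the heart of the theorem is missing. In the step you yourself label ``the main obstacle'' you only assert that a non-degenerate component satisfies $P_{n_0}(x)=\lambda_{n_0}S^{-1}x^{n_0}S$ (or the transpose form), claiming that $q\mapsto P_{n_0}(q)$ is an injective assignment into rank-one elements whose ranges and kernels ``interlock'' like those of the $q$'s, and that ``the arguments of Proposition~\ref{prop:BH} then manufacture the intertwiner $S$.'' None of this is justified: a priori nothing forces $P_{n_0}(q)$ to have rank one, nothing forces injectivity, and the proof of Proposition~\ref{prop:BH} uses exactly the hypothesis ($P(1)$ invertible, or the range containing all positives) that is unavailable here --- the paper states explicitly that this is why its earlier results do not apply summand by summand. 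So the crucial implication ``some component has an output of nonzero trace $\Rightarrow$ $s=m$ and a similarity $S$ exists'' is asserted, not proved.

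For comparison, the paper closes this gap with two tools you do not use: the linearization Lemma~\ref{lem:C-form} (writing $P_n(x)=T_n(x^n)$ with $T_n$ linear), and Lemma~\ref{lem:nr}, whose proof derives from orthogonality on projections the identities $\theta(1)\theta(a^2)=\theta(a)^2$ and $\theta(1)\theta(ab+ba)=\theta(a)\theta(b)+\theta(b)\theta(a)$, extracts a Riesz idempotent $e$ from a nonzero spectral point of $\theta(1)$ (this is the substitute for invertibility of $P(1)$), shows $a\mapsto e\theta(a)/\lambda$ is a Jordan homomorphism, and then invokes Herstein's theorem and Noether--Skolem to force $s=m$ and the similarity/transpose form. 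Some argument of this strength (or a genuine replacement, e.g.\ a Wigner-type analysis of the projection assignment carried out in full) is required; without it your proof is incomplete at its central point. A smaller, fixable slip: you define ``degenerate'' by $\operatorname{tr}P(E_{ii})=0$ for the standard diagonal units only, but to conclude $\operatorname{tr}\circ P\equiv0$ (hence case (A)) you need $\operatorname{tr}P(q)=0$ for \emph{every} rank-one projection $q$, since the spectral projections of a general self-adjoint element need not be diagonal; the definition should be stated for all rank-one projections. Likewise your parenthetical claim that $s<m$ forces case (A) rests entirely on the unproved main step.
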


The proof of Theorem \ref{thm:main-matrix} will be given in the next section.
The following example shows that the exception case in Theorem \ref{thm:main-matrix} can occur
when $s=m$.

\begin{example}\label{ex:trival-multiplication}
Let $E_{ij}$ be the matrix unit with `1' at the $(i,j)$th entry and `0' elsewhere.
Consider the linear map
$T:M_2\to M_2$ defined by
$T(E_{11})= E_{12}$,
and $T(E_{ij})=0$ for all other ${i,j}$.  Then $T$ is an
orthogonally multiplicative, and linear (and thus holomorphic) map.
It is plain that the range of $T$ consists of nilpotent  matrices.
\end{example}

On the other hand, we can have other possibilities when  the range have larger dimension
than the domain.

\begin{example}\label{eg:kk2}
Consider $\theta:M_k\to M_{k+2}$ defined by
$$
\begin{pmatrix}a_{ij}\end{pmatrix}
\mapsto
\begin{pmatrix}
    0&a_{11}&a_{12}&\ldots&a_{1k}&0\\
    0&0&0&\ldots&0&a_{11}\\
    0&0&0&\ldots&0&a_{21}\\
    &\vdots&&\ddots&&\vdots\\
    0&0&0&\ldots&0&a_{k1}\\
        0&0&0&\ldots&0&0
    \end{pmatrix}.\quad
$$
Then $\theta$ is  linear (and thus holomorphic), and orthogonally multiplicative
on self-adjiont elements.  Note that
the range of $\theta$ does not have trivial multiplication, since
$\theta(E_{11})^2=E_{1,k+2}$.
However,  $\theta$ cannot be written as the form $c\varphi$
for any fixed element $c$ in $M_{k+2}$ and any homomorphism or anti-homomorphism
$\varphi:M_k\to M_{k+2}$.
Assume on the contrary that $\theta=c\vp$.  Then we arrive at a contradiction
\begin{align*}
E_{1,k+2}&=\theta(E_{11})^2=\theta(E_{11})c\vp(E_{11})
    =\theta(E_{11})c(\vp(E_{12})\vp(E_{21}))\\
    &=\theta(E_{11})(c\vp(E_{12}))\vp(E_{21}))
    =\theta(E_{11})\theta(E_{12})\vp(E_{21})
    =0\vp(E_{21})=0.
\end{align*}
\end{example}

\begin{example}\label{eg:kk2-nonzerotrace}
Consider $\phi:M_k\to M_{2k+2}$ defined by
$$
\begin{pmatrix}a_{ij}\end{pmatrix}
\mapsto
\begin{pmatrix}a_{ij}\end{pmatrix} \oplus \theta\begin{pmatrix}a_{ij}\end{pmatrix}.
$$
Here, $\theta$ is the map defined in Example \ref{eg:kk2}.
Again $\theta$ is  linear (and thus holomorphic), and orthogonally multiplicative on self-adjoint elements.
However, $\theta$
cannot be written in any form stated in Theorem \ref{thm:main-matrix}(B),
although its range contains elements of nonzero trace.
\end{example}

The infinite dimensional case can be more complicated.

\begin{example}\label{eg:infty}
Let $H$ be a separable infinite dimensional Hilbert space with an orthonormal basis $\{e_n: n=1,2,\ldots\}$.
\begin{enumerate}[(a)]
    \item Consider $\theta:B(H)\to B(H)$ defined by
$$
\begin{pmatrix}a_{ij}\end{pmatrix}
\mapsto
J^2\begin{pmatrix}a_{ij}\end{pmatrix}{J^*}^2 + \begin{pmatrix}
    0&a_{11}\\
    0&0
    \end{pmatrix}.
$$
Here, $J: B(H)\to B(H)$ is the unilateral shift operator sending $e_n$ to $e_{n+1}$ for $n=1,2,\ldots$.
Then $\theta$ is not of the standard form, while its range contains elements of nonzero trace.

     \item Let $E$ and $F$ be the isometries in $B(H)$ such that $E(e_n)=e_{2n}$ and $F(e_n)=e_{2n-1}$ for $n=1,2,\ldots$, respectively.
Define a holomorphic function $H:B(H)\to B(H)$ by
$$
H(a) = EaE^* + F(a^t)^2F^*, \quad\forall a\in B(H).
$$
Then $H$ is orthogonally additive and orthogonally multiplicative, but not zero product preserving.
(Readers can make up one preserving zero products easily.)
The range of $H$ contains the identity $H(1)=1$.  However, it cannot be written in any form stated in Theorem \ref{thm:main-matrix}(B).
\end{enumerate}
\end{example}

\section{The proofs}

We begin with an  observation.

\begin{lemma}\label{lem:ortho-sums}
Let $H: B_{E}(0;r)\to F$ be a holomorphic function between C*-algebras with Taylor series at zero
$H = \sum_{n=0}^\infty P_n$.
\begin{enumerate}[(a)]
\item If $H$ is orthogonally additive on self-adjoint elements then each $P_n$ is also orthogonally additive on self-adjoint elements.
\item If $H$ is orthogonally multiplicative on self-adjoint elements  then each $P_n$ is also orthogonally multiplicative on self-adjoint elements.
Indeed, for orthogonal self-adjoint elements $x,y$ in $B_{E}(0;r)$ we have
\begin{align*}
xy=0 \quad\implies\quad P_m(x)P_n(y)=0, \quad m,n =0,1,2,\ldots.
\end{align*}
\end{enumerate}
\end{lemma}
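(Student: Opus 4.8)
The plan is to use the Cauchy integral formula \eqref{eq:cauchy} to extract each $P_n$ from $H$ and transfer the orthogonality hypotheses across. Fix self-adjoint $x,y$ in $B_E(0;r)$ with $xy=0$ (hence also $yx=0$, since for self-adjoint elements $xy=0 \iff yx=0$). The key point is that for every scalar $\lambda$ on the unit circle, the elements $\lambda x$ and $\lambda y$ — or more precisely, for part (a), the rescaled pair $\mu x$ and $\mu y$ with $|\mu|\le 1$ chosen so that these stay in the ball — are again self-adjoint only when $\lambda$ is real, so I must be a little careful: rather than rotating both by the same $\lambda$, I will homogenize in a real parameter. Concretely, for part (a): for $t\in[0,1]$ the elements $tx$ and $ty$ are self-adjoint, lie in $B_E(0;r)$, and satisfy $(tx)(ty)=0$, so orthogonal additivity of $H$ gives $H(t(x+y)) = H(tx)+H(ty)$ for all such $t$. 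Expanding each side in its Taylor series and matching the coefficient of $t^n$ (using uniqueness of the Taylor coefficients, i.e.\ \eqref{eq:cauchy} applied along the real segment, or analytic continuation in $t$), we get $P_n(x+y) = P_n(x) + P_n(y)$ for every $n$; since $x,y$ were arbitrary orthogonal self-adjoint elements, each $P_n$ is orthogonally additive on self-adjoint elements.

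For part (b), the same homogenization handles the weak statement $P_n(x)P_n(y)=0$, but to get the stronger mixed statement $P_m(x)P_n(y)=0$ I would use two independent real parameters. For $s,t\in[0,1]$, the elements $sx$ and $ty$ are self-adjoint, lie in the ball, and satisfy $(sx)(ty)=0$, so orthogonal multiplicativity of $H$ gives $H(sx)H(ty)=0$. Substituting the Taylor series,
\[
0 = H(sx)H(ty) = \Bigl(\sum_{m\ge 0} s^m P_m(x)\Bigr)\Bigl(\sum_{n\ge 0} t^n P_n(y)\Bigr) = \sum_{m,n\ge 0} s^m t^n\, P_m(x)P_n(y),
\]
with both series converging absolutely and uniformly for $s,t$ in a neighborhood of $[0,1]^2$ (after possibly shrinking by a fixed factor, which is harmless because $P_n$ is $n$-homogeneous). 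Since this double power series in $(s,t)$ vanishes identically on an open set, all its coefficients vanish: $P_m(x)P_n(y)=0$ for all $m,n\ge 0$. Taking $m=n$ recovers that each $P_n$ is orthogonally multiplicative on self-adjoint elements.

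The one genuine subtlety — the ``hard part,'' though it is more a matter of care than depth — is justifying the coefficientwise comparison rigorously: I want to argue either via the uniqueness of Cauchy coefficients \eqref{eq:cauchy} on the one-variable circle (for part (a), replacing the real segment by a circle of small radius, using that $P_n(\lambda x) = \lambda^n P_n(x)$ so the identity $H(\lambda(x+y)) = H(\lambda x) + H(\lambda y)$, which holds for real $\lambda$ in $[-1,1]$, extends by analyticity to complex $\lambda$ near $0$), or via a two-variable version for part (b). An alternative clean route is to observe that $\lambda\mapsto H(\lambda x)$ is a holomorphic $F$-valued function on a disc whose $n$-th Taylor coefficient is $P_n(x)$; the product of two such, $\lambda\mapsto H(\lambda x)H(\lambda y)$, is then holomorphic with $n$-th coefficient $\sum_{m+k=n} P_m(x)P_k(y)$, but this only gives the ``diagonal-sum'' vanishing, so for the full mixed statement the honest two-parameter argument above is the one to carry out. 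Either way, no further structural input is needed; the lemma is essentially a bookkeeping consequence of homogeneity plus uniqueness of power-series expansions.
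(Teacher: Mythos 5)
Your proof is correct and follows essentially the same route as the paper: scale the orthogonal self-adjoint pair by small scalars (one parameter for additivity, two independent parameters $\alpha,\beta$ for the mixed multiplicative statement), expand $H$ in its Taylor series, and conclude by uniqueness of power-series coefficients. Your extra care in keeping the scaling parameters real (so the scaled elements remain self-adjoint) and then invoking uniqueness/analytic continuation is a slightly more scrupulous version of the paper's ``as $\alpha,\beta$ can be arbitrary (but small)'' step, but the argument is the same.
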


\begin{proof}
Let $\{x,y\}$ be an orthogonal pair of self-adjoint elements in $B_{E}(0;r)$.
Suppose first that $H$ is orthogonally additive.
For sufficiently small scalar $\alpha$, we have
\begin{align*}
&H(\alpha x+\alpha y) = \sum_n P_n(\alpha x + \alpha y) = \sum_n \alpha^n P_n( x + y)\\
 =\ &H(\alpha x) + H(\alpha y) = \sum_{n} (P_n(\alpha x) + P_n(\alpha y))= \sum_{n} \alpha^n (P_n( x) + P_n( y)).
\end{align*}
As $\alpha$ can be arbitrary (but small), we see that
$$
P_n(x+y) =P_n(x)+P_n(y), \quad n=0,1,2,\ldots.
$$

Suppose then $H$ is orthogonally multiplicative.
For sufficiently small scalars $\alpha, \beta$, we have
\begin{align*}
0 = H(\alpha x)H(\beta y) = \sum_{m,n} P_m(\alpha x)P_n(\beta y)
=  \sum_{m,n} \alpha^m\beta^n P_m( x)P_n( y).
\end{align*}
As $\alpha,\beta$ can be arbitrary (but small), we see that
$$
P_n(x)P_m(y) =0, \quad n,m=0,1,2,\ldots.
$$
\end{proof}

It follows from  Lemma \ref{lem:ortho-sums}
that if $H$ is orthogonally additive  then $P_0 = 0$.

The following linearization of orthogonally additive $n$-homogeneous
polynomials of matrix algebras is an important tool of us.
The  result for general C*-algebras is given by
C. Palazuelos, A. M. Peralta and I. Villanueva \cite{PPV08}, and
M. Burgosy, F. J. Fern\'{a}ndez-Poloz, J. J. Garc\'{e}sx and A. M. Peralta \cite{BFGP09},
which extend the
commutative version of D. Perez-Garcia and I. Villanueva \cite{PV} (see also \cite{PPV08}).

\begin{lemma}\label{lem:C-form}
Let  $F$ be a complex Banach space, and $P:M_m\to F$ an
$n$-homogeneous polynomial.   If $P$ is orthogonally additive on self-adjoint elements then
there exists a linear operator $T : M_m \to F$ such that
$$
P(x) = T(x^n), \quad\forall x \in M_m.
$$
\end{lemma}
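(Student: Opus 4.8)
The plan is to reduce the statement to a purely algebraic fact about polynomial maps on $M_m$ that are additive on orthogonal self-adjoint pairs, and to extract the linearizing operator $T$ by evaluating $P$ on projections and using functional calculus. First I would recall that a self-adjoint $x \in M_m$ has a spectral decomposition $x = \sum_{k} t_k p_k$ with mutually orthogonal (hence pairwise zero-product) self-adjoint projections $p_k$ and real scalars $t_k$. Since $P$ is orthogonally additive on self-adjoint elements and $n$-homogeneous, applying additivity repeatedly along this orthogonal decomposition gives
$$
P(x) = P\Bigl(\sum_k t_k p_k\Bigr) = \sum_k P(t_k p_k) = \sum_k t_k^n\, P(p_k).
$$
Now define $T_0(p) := P(p)$ on projections and observe that $x^n = \sum_k t_k^n p_k$, so we would like a linear $T$ on $M_m$ with $T(p) = P(p)$ for every projection $p$; then $P(x) = T(x^n)$ for every self-adjoint $x$, and since both sides are $n$-homogeneous polynomials that agree on the self-adjoint (hence real-spanning) part of $M_m$, a standard complexification/polarization argument upgrades this to all of $M_m$.

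The core task is therefore to produce the linear operator $T: M_m \to F$ from the values $\{P(p) : p \text{ a projection}\}$. The natural route is to show that the assignment $p \mapsto P(p)$ is "additive on orthogonal projections" — i.e. $P(p+q) = P(p) + P(q)$ whenever $p,q$ are orthogonal projections, which is immediate from orthogonal additivity — and then to invoke the known linearization results cited just before the lemma (Perez-Garcia–Villanueva \cite{PV}, Palazuelos–Peralta–Villanueva \cite{PPV08}, Burgos–Fernández-Polo–Garcés–Peralta \cite{BFGP09}): an orthogonally additive $n$-homogeneous polynomial on a C*-algebra factors through the $n$-th power map via a bounded linear operator. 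Since $M_m$ is finite-dimensional, every linear map is automatically bounded, so the hypotheses of those theorems are met trivially and the conclusion is exactly the existence of $T$ with $P(x) = T(x^n)$. Thus much of the work is already packaged in the literature; what remains is to check that the matrix-algebra setting is a genuine special case of the C*-algebra statements, which it is, and to note that self-adjointness of the test elements is exactly the hypothesis used in those references.

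If instead one wants a self-contained argument (which is probably the cleaner exposition for a matrix-specific lemma), I would define $T$ directly: set $T(E_{ii}) := P(E_{ii})$ for the diagonal matrix units, and for a rank-one projection $p_v = vv^*$ onto a unit vector $v$, use the displayed spectral formula to see that $P$ is already determined on all projections once one knows it on rank-one projections together with the additivity relation; then show $v \mapsto P(vv^*)$ extends to a map on rank-one matrices $vw^*$ that is linear in each slot, and assemble $T$ on all of $M_m$ by linearity on the basis $\{E_{ij}\}$. The main obstacle in this direct approach is proving that the partially-defined map on projections genuinely extends linearly — i.e. that the relations forced by orthogonal additivity among different spectral decompositions of the same self-adjoint matrix are consistent and do not over-determine $T$ — and it is precisely this consistency that the cited linearization theorems establish in general. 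I would therefore lean on \cite{PV,PPV08,BFGP09} for that step rather than re-deriving it, remarking only that finite-dimensionality removes all boundedness and convergence subtleties.
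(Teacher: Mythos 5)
Your proposal is correct and takes essentially the same route as the paper: the paper offers no independent proof of Lemma \ref{lem:C-form}, but justifies it by citing the C*-algebra linearization theorems of \cite{PPV08} and \cite{BFGP09} (extending \cite{PV}), which is precisely the step you defer to those references, with finite-dimensionality of $M_m$ removing any boundedness issue. Your preliminary spectral-decomposition sketch is a sound reduction, and you correctly identify that the only genuine content --- the consistent linear extension from values on projections --- is what the cited results supply.
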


Recall that we say a map $\theta$ between rings preserving zero products if $\theta(x)\theta(y)=0$
whenever $xy=0$.
We say that a set $Z$ of a ring has trivial products, if $xy=0$ for all $x,y$ in $Z$.

\begin{lemma}[{\cite[Corollary 2.4]{CKLW03}}]\label{lem:zero}
Let $m$ and $s$ be
positive integers with $m\geq 2$ and $m\geq s$.
Let ${\mathbb F}$ be an algebraically closed field of
characteristic $0$ and $\theta:M_m({\mathbb F})\to M_s({\mathbb
F})$ a linear map preserving zero products. Then either
the range of $\theta$ has trivial multiplication,
or $m=s$ and there exist an invertible matrix $S$ in $M_m(\mathbb F)$ and a nonzero scalar $c$ such that
$$
\theta(x)=cS^{-1}xS \quad \forall x\in M_m(\mathbb{F}).
$$
\end{lemma}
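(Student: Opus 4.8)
The plan is to reduce the zero-product hypothesis to a single ``weighted homomorphism'' identity and then let the Jordan structure of $u:=\theta(1)$ decide which of the two alternatives occurs.

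First I would evaluate $\theta$ on matrix units. Writing $A_{ij}=\theta(E_{ij})$, the relations $E_{ij}E_{kl}=0$ for $j\neq k$ give $A_{ij}A_{kl}=0$ whenever $j\neq k$. To pin down the surviving products $A_{ij}A_{jl}$ I would feed in the zero products $(E_{kp}-E_{kq})(E_{pl}+E_{ql})=0$, which yield $A_{kp}A_{pl}=A_{kq}A_{ql}$; hence $A_{ij}A_{jl}$ is independent of the middle index $j$. Using $u=\sum_i A_{ii}$ and $A_{ij}A_{kk}=0$ for $k\neq j$ one identifies this common value as $A_{ij}A_{jl}=A_{il}u=uA_{il}$, and expanding a general product gives the fundamental identity
\[
\theta(x)\theta(y)=u\,\theta(xy)=\theta(xy)\,u,\qquad x,y\in M_m .
\]
Setting $x=1$ shows $u$ commutes with the whole range; conversely the identity forces zero-product preservation, so it captures the hypothesis exactly.

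Next I would read off the dichotomy from the spectrum of $u$. Since $\mathbb{F}$ is algebraically closed, decompose $\mathbb{F}^{s}$ into generalized eigenspaces of $u$; as the range commutes with $u$ each is invariant, and the identity restricts to each block. On a block for an eigenvalue $\lambda\neq0$ the restriction $u_\lambda$ is invertible, so $\psi_\lambda:=u_\lambda^{-1}\theta(\cdot)|_{V_\lambda}$ is a unital homomorphism of $M_m$ into $\operatorname{End}(V_\lambda)$; as $M_m$ is simple it is injective, forcing $\dim V_\lambda\ge m$. With $\sum_\mu\dim V_\mu=s\le m$ this is possible only if there is a single eigenvalue $\lambda\neq0$, $\dim V_\lambda=m=s$, and no nilpotent part. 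Thus either $u$ is invertible with $m=s$, or $u$ is nilpotent. In the invertible case $\psi:=u^{-1}\theta$ is a unital, injective, hence bijective endomorphism of $M_m$, i.e.\ an automorphism; by the Skolem--Noether theorem $\psi(x)=S^{-1}xS$ for an invertible $S$. Since $\psi$ is onto $M_s$ and $u$ commutes with every $\psi(x)$, $u$ is central, so $u=cI$ with $c\neq0$ and $\theta(x)=c\,S^{-1}xS$, the asserted form. (The transpose form is excluded here precisely because one-sided zero products are not preserved by $x\mapsto x^{t}$.)

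It remains to treat the nilpotent case, and this is where I expect the real work to lie: I must show that $u$ nilpotent forces $u\,\theta(z)=0$ for all $z$, equivalently (via the fundamental identity) that the range has trivial multiplication. The plan is to filter $\mathbb{F}^{s}\supseteq u\mathbb{F}^{s}\supseteq u^{2}\mathbb{F}^{s}\supseteq\cdots$ and pass to the associated graded, on which $u$ acts with zero symbol; there the relations among the leading symbols $q_{ij}$ of the $A_{ij}$ degenerate to honest matrix-unit relations $q_{ij}q_{kl}=\delta_{jk}q_{il}$, while $\sum_i q_{ii}$ is the symbol of $u$ and hence vanishes. Since a family of matrix units whose diagonal idempotents sum to $0$ is identically zero, the leading symbols vanish and the range drops a filtration level; iterating collapses the range into $\ker(u\cdot)$. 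Making this ``leading symbol'' bookkeeping precise --- in particular checking that products of symbols do not collapse prematurely --- is the main obstacle. A cleaner alternative I would run in parallel is an induction on $s$ using the $u$-invariant subspaces $\ker u$ and $\mathbb{F}^{s}/\ker u$, on which $\theta$ again satisfies the fundamental identity with strictly smaller $s$ (so the inner alternative is excluded and only trivial multiplication survives); this readily yields $u^{2}\,\theta(z)=0$, and the task is then to close the gap down to $u\,\theta(z)=0$.
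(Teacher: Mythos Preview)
The paper does not prove this lemma at all; it is quoted from \cite{CKLW03} (Corollary~2.4 there) and stated without argument. So there is no in-paper proof to compare against, and I can only comment on the correctness of your attempt.

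Your derivation of the identity $\theta(x)\theta(y)=u\,\theta(xy)=\theta(xy)\,u$ from the zero-product hypothesis is correct, and your treatment of the case where $u=\theta(1)$ has a nonzero eigenvalue is sound: the generalized-eigenspace count, simplicity of $M_m$, Skolem--Noether, and the centrality of $u$ all work exactly as you describe, and the transpose form is indeed excluded because $x\mapsto x^t$ reverses one-sided zero products.

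The genuine gap is precisely where you locate it, in the nilpotent case, and neither of your two sketches closes it. In the associated-graded approach, the relation $A_{ij}A_{kl}=\delta_{jk}\,uA_{il}$ does \emph{not} descend to matrix-unit relations $q_{ij}q_{kl}=\delta_{jk}q_{il}$: since $u$ sends each $W_k=u^k\mathbb{F}^s$ into $W_{k+1}$, the right-hand side drops a filtration level, so on every graded piece the relation reads $q_{ij}q_{kl}=0$ for \emph{all} indices; there are no matrix units and your ``idempotents summing to zero'' argument is vacuous. In the inductive approach you correctly reach $u^{2}\theta(z)=0$, but the passage to $u\,\theta(z)=0$ is the whole content of the nilpotent case and you have not supplied it. That this step genuinely needs the hypothesis $s\le m$ (and is not a formal consequence of the fundamental identity) is shown by the map of Example~\ref{eg:kk2}: for $k=2$ one checks directly that $\theta(x)\theta(y)=(xy)_{11}E_{14}=u\,\theta(xy)$, so $\theta$ preserves zero products and satisfies your fundamental identity with $u=\theta(1)$ nilpotent, yet $u\,\theta(E_{11})=E_{14}\neq0$. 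Thus any completion must exploit $s\le m$ more sharply than your outline does.
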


Note that the orthogonal multiplicity of an orthogonal additive polynomial $P$ does not guarantee its linearization
$T$ preserving zero products.  So we cannot apply Lemma \ref{lem:zero} directly.
But when $x,y$ are idempotents with $xy=0$, we have $T(x)T(y) = P(x)P(y)=0$.
This suggests we establish the following two lemmas.  Fortunately, they are sufficient for our proof of Theorem \ref{thm:main-matrix}.

\begin{lemma}\label{lem:nr}
Let $m$ and $s$ be
positive integers with $m\geq 2$ and $m\geq s$.
Let  $\theta:M_m\to M_s$ be a complex linear map.
Assume that
\begin{align}\label{eq:pq}
\theta(p)\theta(q) =0\quad\text{whenever}\quad \text{$p,q$ are orthogonal rank one projections.}
\end{align}
Then either
\begin{enumerate}[(A)]
    \item the range of $\theta$ consists of nilpotent elements (this happens whenever $s<m$),
or
    \item $m=s$ and there exist an invertible matrix $S$ in $M_m$ and a nonzero scalar $\lambda$ such that
$$
\theta(x)=\lambda S^{-1}xS, \quad \forall x\in M_m,
$$
or
$$
\theta(x)=\lambda S^{-1}x^tS, \quad \forall x\in M_m.
$$
\end{enumerate}
\end{lemma}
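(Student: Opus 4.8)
The plan is to extract from \eqref{eq:pq} a short list of relations among the images of the matrix units, to peel off the nilpotent alternative, and in the remaining case to reduce to Lemma \ref{lem:zero}. First I would propagate \eqref{eq:pq}: writing two orthogonal projections as sums of mutually orthogonal rank one projections gives $\theta(p)\theta(q)=0$ for every pair of orthogonal projections $p,q$, and the spectral theorem then yields $\theta(x)\theta(y)=0$ for all self-adjoint $x,y\in M_m$ with $xy=0$; since such a pair automatically satisfies $xy=yx=0$, in fact $\theta(x)\theta(y)=\theta(y)\theta(x)=0$. By bilinearity, $\theta(a)\theta(b)=0$ whenever $a$ and $b$ are supported on disjoint subsets of the coordinates; in particular, writing each matrix unit as a complex combination of self-adjoint matrices supported on the same pair of coordinates,
$$
\theta(E_{pq})\,\theta(E_{rs})=0\qquad\text{whenever}\quad \{p,q\}\cap\{r,s\}=\emptyset ,
$$
and taking $p=q$ and $r=s$ gives $a_ia_j=0=a_ja_i$ for $i\ne j$, where $a_i:=\theta(E_{ii})$; thus the range of $a_j$ lies in $\bigcap_{i\ne j}\ker a_i$.

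Next I would establish the dichotomy: either the range of $\theta$ consists of nilpotent elements, or $m=s$ and either $\theta$ or $\theta\circ\tau$ is zero product preserving, where $\tau(x)=x^t$. The hinge is an analysis of the family $a_1,\dots,a_m$ of pairwise ``orthogonal'' elements, combined with the restrictions of $\theta$ to the overlapping $2\times2$ corners $M_2^{(ij)}$, where one must use the continuous phase-parameter families of rank one projections onto lines spanned by $\alpha e_i+\beta e_j$ (separating Fourier modes in the phases) to squeeze out finer relations. One shows that either some of the $a_i$ degenerate, in a way that propagates through the relation above to make every $\theta(E_{pq})$ nilpotent and hence --- since products of matrix unit images with disjoint index sets vanish --- the whole range nilpotent (this is what must occur when $s<m$, where $\mathbb{C}^s$ has no room for the subspaces $\operatorname{ran}a_j$ to sit in sufficiently general position), or else the $a_i$ are linearly independent rank one idempotents up to a common nonzero scalar, whence $s=m$ and, after a similarity, $a_i=\lambda E_{ii}$. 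The corner relations then pin each $\theta(E_{pq})$ down to a scalar multiple of $E_{pq}$ (consistently in $p,q$) or a scalar multiple of $E_{qp}$; in the former case $\theta(E_{pq})\theta(E_{rs})=0$ whenever $q\ne r$, i.e.\ $\theta$ preserves zero products, and in the latter the same holds for $\theta\circ\tau$. Applying Lemma \ref{lem:zero} to $\theta$ (resp.\ to $\theta\circ\tau$) then gives either a range with trivial multiplication --- a special case of (A) --- or precisely the first (resp.\ second) representation in (B).

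I expect the middle step to be the main obstacle. The relations inherited from \eqref{eq:pq} only control products of matrix unit images whose index sets are disjoint, which is strictly weaker than zero product preservation, so the genuine content is closing that gap in the non-degenerate case; this is also exactly where the transposed alternative becomes unavoidable, since \eqref{eq:pq} is symmetric in $p$ and $q$ whereas the conclusion of Lemma \ref{lem:zero} is one-sided. Disentangling the genuinely degenerate configurations --- which must collapse to a nilpotent range, with a nontrivial verification when $s<m$ --- from the rigid ones is the heart of the matter.
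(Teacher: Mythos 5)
Your proposal is a plan rather than a proof: the step you yourself flag as ``the heart of the matter'' --- the dichotomy asserting that either the $a_i=\theta(E_{ii})$ degenerate in a way that forces the whole range to be nilpotent, or else $s=m$ and the $a_i$ are rank one idempotents equal (after a simultaneous similarity) to a common scalar times $E_{ii}$, with the corner analysis then pinning $\theta(E_{pq})$ to a multiple of $E_{pq}$ or of $E_{qp}$ --- is stated but never argued. The relations you actually derive ($\theta(a)\theta(b)=0$ for self-adjoint $a,b$ with $ab=0$, hence $\theta(E_{pq})\theta(E_{rs})=0$ for disjoint index sets) are correct but, as you admit, strictly weaker than what is needed, and no mechanism is given for closing the gap via the ``phase-parameter families of rank one projections.'' Even the degenerate branch has an unjustified leap: knowing each $\theta(E_{pq})$ is nilpotent and that products with disjoint index sets vanish does not by itself make an arbitrary linear combination $\theta(x)$ nilpotent, since the powers of $\theta(x)$ involve products of unit images with overlapping indices, which your relations do not control. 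So as it stands the proposal has a genuine gap at exactly the point where the lemma's content lies.

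For comparison, the paper avoids matrix-unit bookkeeping entirely. From \eqref{eq:pq} applied to $p$ and $1-p$ one gets $\theta(1)\theta(p)=\theta(p)\theta(1)=\theta(p)^2$ for every projection $p$, hence by spectral decomposition $\theta(1)$ commutes with the whole range and $\theta(1)\theta(a^2)=\theta(a)^2$ for all $a$, which polarizes to the Jordan-type identity $\theta(1)\theta(ab+ba)=\theta(a)\theta(b)+\theta(b)\theta(a)$. If some $\theta(x)$ is not nilpotent, then $\theta(1)$ is not nilpotent; a Riesz idempotent $e$ with $e\theta(1)=\lambda e$ lets one define $\Psi(a)=e\theta(a)/\lambda$, which is a (nonzero, hence by simplicity injective) Jordan homomorphism, so Herstein's theorem makes it a homomorphism or anti-homomorphism, forcing $s=m$, $e=1$, $\theta(1)=\lambda$, and Noether--Skolem gives the two representations in (B). Your intended reduction to Lemma \ref{lem:zero} is plausible as a strategy, but you would still have to supply the entire combinatorial analysis that the Jordan/Herstein route replaces; until that is done, the proof is incomplete.
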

\begin{proof}
Note that \eqref{eq:pq} holds indeed for all orthogonal pairs  of self-adjoint matrices (through spectral decompositions).
For any projection $p$ in $M_m$, we have
$$
(1-p)p=p(1-p)=0.
$$
The assumption implies that
$$
\theta(1-p)\theta(p)=\theta(p)\theta(1-p)=0.
$$
Hence
$$
\theta(1)\theta(p)=\theta(p)\theta(1)=\theta(p)^2
$$
holds for all projections $p$ in $M_m$.  By the spectral theory, we have
\begin{align}\label{eq:1a}
\theta(1)\theta(a)=\theta(a)\theta(1)
\end{align}
for all self-adjoint, and thus for all, $a$ in $M_m$.
Moreover,
$$
\theta(1)\theta(a^2)=\theta(a^2)\theta(1)=\theta(a)^2
$$
holds for all self-adjoint elements $a$ in $M_m$.
Considering $(a+b)^2$ for two self-adjoint elements $a$ and $b$,
we have
$$
\theta(1)\theta(ab+ba)=\theta(a)\theta(b) + \theta(b)\theta(a).
$$
Since very complex matrix $a$ in $M_m$ can be written as $a=b+\sqrt{-1}c$  for two self-adjoint matrices $b$ and $c$,
we see that
\begin{align}\label{eq:1a2}
\theta(1)\theta(a^2)=\theta(a^2)\theta(1)=\theta(a)^2, \quad\forall a\in M_m.
\end{align}
It follows further that
\begin{align}\label{eq:1jordan}
\theta(1)\theta(ab + ba) = \theta(a)\theta(b) + \theta(b)\theta(a),\quad\forall a,b\in M_m.
\end{align}

Suppose that there is an $x$ in $M_m$ such that $\theta(x)$ is not nilpotent.
It follows from \eqref{eq:1a} and \eqref{eq:1a2} that
$$
\theta(1)^s\theta(x^{2})^s = (\theta(1)\theta(x^{2}))^s = \theta(x)^{2s} \neq 0.
$$
Consequently, $\theta(1)$ is not nilpotent, and thus its spectrum contains a complex number
$\lambda\neq 0$.  Using the Riesz functional calculus, we have
an idempotent $e$ in $M_s$ such that
$e\theta(1)=\theta(1)e= \lambda e\neq 0$ and $e$ commutes with every matrix commuting with $\theta(1)$
(see, e.g., \cite[Prop.\ 4.11]{Conway90}).
Since $\theta(1)$ commutes with all $\theta(a)$'s, so does $e$.
Define $\Psi: M_m\to M_s$ by $\Psi(a)=e\theta(a)/\lambda$.  It follows from \eqref{eq:1jordan} that
$$
\Psi(ab+ba)=\Psi(a)\Psi(b) + \Psi(b)\Psi(a), \quad\forall a, b\in M_m.
$$
By the well-known theorem of Herstein, we see that either
$\Psi=0$, or $\Psi$ is an injective
homomorphism or anti-homomorphism.
But the first case implies the contradiction $e = \Psi(1)=0$.
Hence, the latter case occurs, and  we must have $s=m$.
This forces $e=1$ and thus $\theta(1)=\lambda$.
It follows from the Noether-Skolem theorem that we have one of the expected representations of $\theta$.
 This completes the proof.
\end{proof}

\begin{lemma}\label{lem:idem-ortho}
Let $m$ and $s$ be
positive integers with $m\geq 2$ and $m\geq s$.
Let  $\theta:M_m\to M_s$ be a complex linear map.
Assume that
\begin{align*}
\theta(e)\theta(f) =0\quad\text{whenever}\quad \text{$e,f$ are rank one idempotents with $ef=0$.}
\end{align*}
If the range of $\theta$ does not consist of nilpotent elements then $s=m$,
and there exist an invertible matrix $S$ in $M_m$ and a nonzero scalar $\lambda$ such that
$$
\theta(x)=\lambda S^{-1}xS, \quad \forall x\in M_n.
$$
\end{lemma}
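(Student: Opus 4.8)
The plan is to bootstrap off Lemma \ref{lem:nr}.

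\textbf{Step 1.} First I would note that the present hypothesis implies the one in Lemma \ref{lem:nr}: if $p,q$ are orthogonal rank one projections, then they are rank one idempotents with $pq=0$, so $\theta(p)\theta(q)=0$. Since the range of $\theta$ is assumed not to consist of nilpotent elements, case (A) of Lemma \ref{lem:nr} is excluded, so case (B) holds — that is, $s=m$ and there are an invertible $S\in M_m$ and a nonzero scalar $\lambda$ with either $\theta(x)=\lambda S^{-1}xS$ for all $x$, or $\theta(x)=\lambda S^{-1}x^tS$ for all $x$. It remains only to rule out the transpose alternative, and this is precisely where the strengthened hypothesis (a one-sided relation $ef=0$, not requiring $e,f$ to be self-adjoint) is used.

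\textbf{Step 2.} Suppose toward a contradiction that $\theta(x)=\lambda S^{-1}x^tS$ for all $x$. Then for any idempotents $e,f$,
\[
\theta(e)\theta(f)=\lambda^2 S^{-1}e^tf^tS=\lambda^2 S^{-1}(fe)^tS ,
\]
so the hypothesis would force $fe=0$ whenever $e,f$ are rank one idempotents with $ef=0$. But this fails: taking (in the top-left $2\times 2$ corner of $M_m$, with zeros elsewhere, which is legitimate since $m\geq 2$) $e=E_{11}$ and $f=\begin{pmatrix}0&0\\1&1\end{pmatrix}$, one checks at once that both are rank one idempotents and $ef=0$, yet $fe=\begin{pmatrix}0&0\\1&0\end{pmatrix}\neq 0$. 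Hence $\theta(e)\theta(f)=\lambda^2 S^{-1}(fe)^tS\neq 0$ because $\lambda\neq 0$ and $S$ is invertible — a contradiction. Therefore the transpose alternative cannot occur and $\theta(x)=\lambda S^{-1}xS$ for all $x\in M_m$.

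\textbf{Obstacle.} I do not anticipate a genuine difficulty here: the substance is in recognizing that Lemma \ref{lem:nr} already does the heavy lifting, and in producing one explicit pair of rank one idempotents with $ef=0\neq fe$ to kill the anti-homomorphism case. The only point needing a moment's care is confirming that the hypothesis of Lemma \ref{lem:nr} is truly implied, which is immediate since for projections $pq=0$ is equivalent to $qp=0$.
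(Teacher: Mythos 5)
Your proof is correct and follows essentially the same route as the paper: apply Lemma \ref{lem:nr} (noting orthogonal rank one projections are rank one idempotents with zero product) to get $s=m$ and the two possible forms, then kill the transpose alternative with the pair $e=E_{11}$, $f=\left(\begin{smallmatrix}0&0\\1&1\end{smallmatrix}\right)$ satisfying $ef=0\neq fe$ — which is exactly the pair of idempotents the paper uses. No gaps.
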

\begin{proof}
Since projections are idempotents, it follows from Lemma \ref{lem:nr} that $m=s$ and
there is an invertible matrix $S$ and a nonzero scalar $\lambda$ such that
$\frac{1}{\lambda} S\theta(x)S^{-1}$ is either always $x$ or always $x^t$.
 Consider the idempotents
$a,b$ in $M_m$ with the top left $2\times 2$ blocks given below and zero elsewhere, respectively:
$$
\left(
  \begin{array}{cc}
    1 & 0 \\
    0 & 0 \\
  \end{array}
\right)
\quad\text{and}\quad
\left(
  \begin{array}{cc}
    0 & 0 \\
    1 & 1 \\
  \end{array}
\right).
$$
Then $ab=0$ but $ba \neq 0$.  Since $\theta$ preserves zero products, so does $\frac{1}{\lambda} S\theta S^{-1}$.
Because $a^t b^t = (ba)^t\neq 0$, we conclude that $\frac{1}{\lambda} S\theta(x)S^{-1}=x$ for all $x$ in $M_m$.
This gives us the desired assertion.
\end{proof}

\begin{proof}[{Proof of Theorem \ref{thm:main-matrix}}]
It is not difficult to see  that the two cases stated in the conclusions are exclusive.
Assume from now on $H(d)$ is a matrix in $M_s$ of nonzero trace for some $d$ in $B_{M_m}(0;r)$.

Lemma \ref{lem:ortho-sums} ensures that each summand $P_n$ is an orthogonally additive and orthogonally multiplicative
$n$-homogeneous polynomial, and the constant term $P_0=H(0)=0$.
Lemma \ref{lem:C-form} provides a
linear map $T_n: M_m\to M_s$ for each $n$ such that
$$
P_n(x)=T_n(x^n), \quad\forall x\in M_m.
$$
Inherited from
$\{P_n\}$, the family $\{T_n\}$  satisfies the orthogonality preserving property stated in \eqref{eq:pq}.

Since $H(d) = \sum_n T_n(d^n)$, the continuity of the trace functional ensures that
some
$T_k(d^k)$ in the sum has nonzero trace.  In particular, $T_k(d^k)$ is not a nilpotent.
Lemma \ref{lem:nr} ensures $m=s$ and provides an invertible
matrix $S_k$ and a nonzero scalar $\lambda_k$   such that
$$
P_k(x) = \lambda_k S_k^{-1}x^kS_k, \quad\forall x\in B_{M_m}(0;r),
$$
or
$$
P_k(x) = \lambda_k S_k^{-1}(x^t)^kS_k, \quad\forall x\in B_{M_m}(0;r).
$$

We claim that all other $P_n$ either carries a similar form or constantly zero.
Redefining $H(x)$ with $\frac{1}{\lambda_k}S_kH(x)S_k^{-1}$ or  $\frac{1}{\lambda_k}S_kH(x^t)S_k^{-1}$, we can assume
$$
P_k(x) = x^k, \quad\forall x\in B_{M_m}(0;r).
$$
Suppose that with a nonzero scalar $\lambda_n$ and an invertible $S_n$ in $M_m$ we have
$$
P_n(x)=\lambda_n S_n^{-1}x^nS_n, \quad\forall x\in B_{M_m}(0;r),
$$
or
$$
P_n(x)=\lambda_n S_n^{-1}(x^t)^nS_n, \quad\forall x\in B_{M_m}(0;r).
$$
By Lemma \ref{lem:ortho-sums}(b),
$$
xS_n^{-1}yS_n = S_n^{-1}yS_n x = 0, \quad\text{whenever $x,y$ are orthogonal  projections}.
$$
This forces
$$
S_n^{-1}yS_n = \alpha_y y
$$
with some scalar $\alpha_y$ for every rank one projection $y$ in $M_m$.
In particular, every nonzero vector is an eigenvector of $S_n$.
Thus $S_n =\alpha I$ for some nonzero scalar $\alpha$.
In other words, we can assume  that $P_n(x)=\lambda_n x^n$  for all $x$ in $B_{M_m}(0;r)$,
or $P_n(x)=\lambda_n (x^t)^n$  for all $x$ in $B_{M_m}(0;r)$.  However, Lemma \ref{lem:ortho-sums}(b) ruins out
the possibility of the second case.  For example, try the pair of orthogonal projections
$$
a=\frac{1}{2}\left(
  \begin{array}{cc}
    1 & \sqrt{-1} \\
    -\sqrt{-1} & 1 \\
  \end{array}
\right)\oplus \mbox{\large $0$}
\quad\text{and}\quad
b= \frac{1}{2}\left(
  \begin{array}{cc}
    1 & -\sqrt{-1} \\
    \sqrt{-1} & 1 \\
  \end{array}
\right)\oplus \mbox{\large $0$}.
$$
While $ab=0$, we have $ab^t\neq 0$.

Suppose next that there is a $P_n$ whose range consists of nilpotent elements.  We will verify that $P_n=0$.
Arguing as above, we have
$$
xT_n(y)= T_n(y)x =0, \quad\text{whenever $x,y$ are orthogonal  projections}.
$$
This forces
$$
T_n(y)=\alpha_y y
$$
with some scalar $\alpha_y$ for every rank one projection $y$ in $M_m$.
Since $\beta^{mn}T_n(y)^m= P_n(\beta y)^m=0$ for all small scalar $\beta >0$, we see that $\alpha_y=0$.
Since every self-adjoint  matrix is an orthogonal sum of rank one projections,
the linear map $T_n=0$, and thus $P_n=0$,  on $M_m$.

The claim is established.
It follows  that there is a scalar sequence $\{\lambda_n\}$  and an invertible $S$ in $M_m$ such that
$$
H(x) = \sum_{n\geq 1} \lambda_n S^{-1}x^nS,\quad\forall x\in B_{M_m}(0;r).
$$
Translating back to the original situation, there is also another possible case that
$$
H(x) = \sum_{n\geq 1} \lambda_n S^{-1}(x^t)^nS,\quad\forall x\in B_{M_m}(0;r).
$$

Finally, assume that $H$ also preserves zero products, and thus so does
every $P_k$.
Consequently, the linearization $T_k$ sends two rank one idempotents with zero products
to a pair of elements with zero products.
By Lemma \ref{lem:idem-ortho}, we have $T_k(x) = \lambda_k S^{-1}x^k S$ for all $x$ in $M_m$.
This forces $H$ carries the first form as in ($\ddag$).
This completes the proof.
\end{proof}

\bigskip

\noindent
\textbf{Acknowledgement.}
This research is supported partially by the Taiwan NSC grant 102-2115-M-110-002-MY2.

The authors are grateful to the referee for his/her careful reading and critical comments.
In particular, the referee questions about the application of Lemma \ref{lem:zero} and a statement in a previous version of this note
concerning the validity of our arguments
in the case when the underlying field is the real numbers.  Due to lack of sufficient tools, we are unable to extend
the results in this note to the real case
at this moment.  We hope to finish this task in a subsequent project.

\bigskip

\noindent
\textbf{Added in proofs.}
In a very recent paper \cite{GPPI13},  J. J. Garc\'es,   A. M. Peralta,   D. Puglisi and  R. M. Isabel obtain another satisfactory result about orthogonally additive holomorphic functions $H:B_A(0;r)\to B$, where $A,B$ are general C*-algebras.
Assume that the range of $H$ contains an invertible element, and
$$
H(a)^*H(b)=H(a)H(b)^*=0\quad\text{whenever}\quad ab=0 \text{ for  self-adjoint } a,b \in B_A(0,r).
$$
They prove that there exist a sequence $\{h_n\}$ in  $B^{**}$ and
Jordan *-homomorphisms $J,\tilde J$ from the multiplier algebra $M(A)$ of $A$ into $B^{**}$ such that
$$
H(a) = \sum_{n\geq 1} h_n J(a^n) = \sum_{n\geq1} \tilde J(a^n)h_n, \quad\forall a\in B_A(0;r).
$$

\end{document}